\documentclass[12pt]{amsart}
\usepackage{geometry}
\usepackage{amssymb,latexsym}
\newcommand{\RNum}[1]{\lowercase\expandafter{\romannumeral #1\relax}}
\usepackage{xcolor}
\usepackage{makecell}
\usepackage{amsthm}
\usepackage{amsmath}
\usepackage{hyperref}
\usepackage{mathrsfs}
\hypersetup{colorlinks=true,linkcolor=blue,urlcolor=blue,citecolor=red}
\newtheorem{proof 1}{proof 1}
\newtheorem{theorem}{Theorem}[section]
\newtheorem{example}{Example}[section]

\newtheorem{remark}{Remark}[section]

\newcommand{\beq}{\begin{equation}}
\newcommand{\eeq}{\end{equation}}

\newtheorem{definition}{Definition}[section]
\newtheorem*{equiv*}{Equivalence Relation}

\newtheorem{corollary}{Corollary}[section]

\title[]{Combinatorics of higher order degenerate  $r$-DERANGED Bell Numbers}

\author{Sithembele Nkonkobe}
\address{School of Mathematics, University of Witwatersrand, 2050 Wits, Johannesburg, South Africa}
\email{snkonkobe@gmail.com}

\date{\today}
\subjclass[2020]{05A15, 05A16, 05A18, 05A19, 11B73, 11B83}
\keywords{barred preferential arrangement, Bell numbers, $r$-derangements}

\begin{document}

	\begin{abstract} When one inserts a number of identical bars in between blocks of an ordered set partition, they obtain a barred preferential arrangement. In this study we define a new generalization of barred preferential arrangements, by considering barred preferential arrangements with no fixed blocks, and where $r$ fixed elements on each section of the barred preferential arrangement are singletons. We derive several combinatorial identities for the number of these barred preferential arrangements. We also provide some asymptotic results.
	   
	\end{abstract}
	\maketitle
	
	\section{Introduction and Preliminaries}
	
Ordered Bell numbers (also known as Fubini numbers) play a central role in combinatorics as they have found applications in various areas of mathematics, science and engineering. For instance, ordered Bell numbers have found application in topology as giving number of chain topologies \cite{stanley1971number}. They also give the number of weak orderings of an $n$-element \cite{bailey1998number}. The numbers have found applications in graph theory on Cayley trees, which are structures used to represent mathematical models\cite{cayley1875analytical}. They also appear in probability theory in connection with moments of a geometric distribution\cite{simsek2025analysis}. The numbers have found applications on a problem about effective resistance of a hypercube where all the connections have resistance of 1 ohm \cite{pippenger2010hypercube}. They have also found applications in probability theory in connection with moments of the negative binomial distribution \cite{adell2023generalized}. In this study, we introduce a new novel generalization of the ordered bell numbers that does not allow fixed blocks. 

  When one inserts bars inbetween blocks of an ordered set partition they obtain a \textit{barred preferential arrangement}. The number of barred preferential arrangements of a set have extensively been studied in \cite{adell2023generalized,JoseNkonkobeunified,adell2023higher,benyi2024combinatorial,corcino2020second,nkonkobe2020combinatorial,nkonkobe2017study}. In a given barred preferential arrangement the bars induce sections onto which ordered blocks of the set $[n]=\{1,2,3,\ldots,n\}$ are formed. In the following example we discuss two examples of barred preferential arrangements.
		
		\begin{example}Two barred preferential arrangements of $[6]$;
			
			\RNum{1}. $6\quad13|\quad|24\quad5$,
			
			\RNum{2}. $34\quad6|12\quad5|\quad|$.
			
			The barred preferential arrangement in \RNum{1} has two bars hence three induced sections. The first section i.e. the section to the left of the first bar (from left to right) has two blocks; $\{6\}$, and $\{1,3\}$. The second section is empty i.e. the section between the two bars. The third section has two blocks $\{2,4\}$, and $\{5\}$. The barred preferential arrangement in \RNum{2} has four sections. The first section has two blocks $\{3,4\}$, and $\{6\}$. The second section has the blocks $\{1,2\}$, and $\{5\}$. Both the third, and the fourth sections of this barred preferential arrangement are empty.
			
		\end{example}
	 Barred preferential arrangements seem to first appear in \cite{ahlbach2013barred,pippenger2010hypercube}. We now define generalized stirling numbers which we will later include in our combinatorial proofs. The generalization $S(n,k,\alpha,\beta,\gamma)$ of the stirling numbers  is defined in \cite{hsu1998unified} in the following way; $(t|\alpha)_n=\sum\limits_{k=0}^{n}S(n,k,\alpha,\beta,\gamma)(t-\gamma|\beta)_k$, where $(t|\alpha)_n$ is the factorial polynomial $(t|\alpha)_n=\prod\limits_{k=0}^{n-1}(t-k\alpha)$, such that $n\geq1$ and $\alpha,\beta,\gamma$ real or complex not all equal to zero. It is known that (see~\cite{hsu1998unified});
\begin{equation}\label{equation:3}
	\frac{[((1+\alpha t)^\frac{\beta}{\alpha}-1)]^k}{\beta^k}(1+\alpha t)^{\frac{\gamma}{\alpha}}=k!\sum\limits_{n=0}^{\infty}S(n,k,\alpha,\beta,\gamma)\frac{t^n}{n!}.
\end{equation}

The generalized stirling numbers $S(n,k,\alpha,\beta,\gamma)$ are a generalization of many other special numbers in mathematics, in the following table we name a few. 
\begin{table}[!h]\caption{Some special cases of\label{equation:60} $S(n,k,\alpha,\beta,\gamma)$}\label{table:1}
\begin{enumerate}
	\item For  $(\alpha,\beta,\gamma)=(0,1,0)$ the numbers  $S(n,k,\alpha,\beta,\gamma)$ give the classical Stirling numbers of the second kind $S(n,k)$ (see \cite{comtet1974advanced}). 
	\item  For $(\alpha,\beta,\gamma)=(0,1,r)$ the numbers  $S(n,k,\alpha,\beta,\gamma)$ give the $r$-Stirling numbers of the second kind $S(n+r,k+r)_r$ (see \cite{broader}). 
	\item For $(\alpha,\beta,\gamma)=(\alpha,1,0)$ the numbers  $S(n,k,\alpha,\beta,\gamma)$ give Carlitz degenerate stirling numbers (see \cite{carlitz}). 
	\item For the case  $(\alpha,\beta,\gamma)=(0,1,-a+b)$ the numbers  $S(n,k,\alpha,\beta,\gamma)$ give the Gould-Hopper’s noncentral Lah numbers \cite{gouldhopper}.
	\item For the case $(\alpha,\beta,\gamma)=(1,0,b-a)$ the numbers  $S(n,k,\alpha,\beta,\gamma)$ give the Riordan’s noncentral Stirling numbers (see \cite{Riordan}).
	\item  For the case $(\alpha,\beta,\gamma)=(0,\beta,r)$ we obtain $r$-Whitney numbers (see \cite{whitney}).
	\end{enumerate}
\end{table}

We note that $(t|1)_n=(t)_n=t(t-1)(t-2)\cdots(t-n+1)$. A combinatorial interpretation of the numbers $S(n,k,\alpha,\beta,\gamma)$ can be found in \cite{benyi2022unfair}.  The next definition gives a combinatorial interpretations of the numbers $k!\beta^kS(n,k,\alpha,\beta,\gamma)$, and the numbers $\beta^kS(n,k,\alpha,\beta,\gamma)$. Going forward when we refer to a cell, we mean a block of a partition, and by compartment we mean a separate part of a given block of a partition. Throughout this study the terms cells and blocks of partitions will be used interchangeably.
\begin{definition}\cite{corcino2001combinatorial}\label{definition:10}
	Let $\gamma$ and $\beta$ be nonnegative integers. Let $\alpha$ be such that ${\alpha}|\beta$ and ${\alpha}|\gamma$. Suppose the following conditions are true:
	\begin{enumerate}
		\item there are $k$ like cells, and 1 unlike cell,
		\item each of the $k$ like cells contains $\beta$ labelled compartments (these cells will be referred to as \textit{ordinary cells}),
		\item  the one unlike cell contains $\gamma$ labelled compartments (the cell will be referred to as the \textit{special cell}),
		\item  in each of the cells, compartments have cyclic ordered numbering,
		\item each compartment may be occupied by at most one element,
		\item in each consecutive available $\alpha$ compartments only the first compartment would be occupied by an element.
	\end{enumerate}
	Then the number of ways of distributing elements of $[n]$ into the compartments satisfying the above conditions, one element at a time, such that the  $k$ like cells are nonempty, is $\beta^kS(n,k,\alpha,\beta,\gamma)$, and the case where the $k$ cells are unlike is $k!\beta^kS(n,k,\alpha,\beta,\gamma)$.
\end{definition}

\begin{definition}\label{definition:110}For $\alpha,\beta,\gamma\in\mathbb{N}_0$ (non-negative integers) such that $\alpha|\beta$, and $\alpha|\gamma$ the numbers $x^{k}\beta^{k}S(n,k,\alpha,\beta,\gamma)$ represent the number of partitions of $[n+r]$  into $k+r+1$ blocks, such that the following conditions are satisfied:
	\begin{enumerate}
		\item of the first $k+r$ blocks, $r$ fixed elements from $[n+r]$ form $r$ singletons, say these are the first $r$ elements of $[n+r]$, these are the only blocks that will not have any compartments,
		\item from the $k+r$ blocks the other $k$ blocks are like and each has $\beta$ labelled compartments, and all the $k$ cells must be none empty, 
			\item the  $k$ blocks are each colored with one of $x$ available colors independently. 
		\item the $(k+r+1)th$ cell has $\gamma$ labelled compartments, this is the only block that may be empty,
	
		%	\item the first $r$ subsets of $[n]$ are in different cells,
		%\item The $k+r$ subsets are  
		%\item there are $k$ like cells, and 1 unlike cell,
		%	\item each of the $k$ like cells contains $\beta$ labelled compartments (these cells will be referred to as \textit{ordinary cells}),
		%	\item  the one unlike cell contains $\gamma$ labelled compartments (the cell will be referred to as the \textit{special cell}),
		\item  in each of the blocks, compartments have cyclic ordered numbering,
		\item each compartment may be occupied by at most one element,
		\item elements are placed into blocks having compartments one at a time,
		\item in each consecutive available $\alpha$ compartments only the first compartment may be occupied by an element.
	\end{enumerate}
\end{definition}

\iffalse
Clearly it follows from \eqref{equation:3}, and Definition~\ref{definition:10} that 	\fontsize{11}{1}
\begin{equation}\label{equation:101}\frac{{[x((1+\alpha t)^\frac{\beta}{\alpha}-1)]^k}(1+\alpha t)^{\frac{\gamma}{\alpha}}[x((1+\alpha t)^\frac{\beta}{\alpha}-1)]^r}{(k+r)!}=\sum\limits_{n=0}^{\infty}x^{k+r}\beta^{k+r}S(n,k+r,\alpha,\beta,\gamma)\frac{t^n}{n!}.\end{equation}
\fi

	The well known ordered Bell numbers $G(n)$ are defined  in the following way
\begin{equation}\label{equation:33}
	\sum\limits_{n=0}^{\infty}G(n)\frac{t^n}{n!}=\frac{1}{2-e^t}. 
\end{equation} Going forward, by Bell numbers we mean ordered Bell numbers. 
Various polynomial generalizations of Bell numbers have been studied in the literature for instance in \cite{adell2023generalized,JoseNkonkobeunified,BoyadzhievDil2016,corcino2020second,gross,kargin2016mellin,kimfubini}. The polynomials $\omega _n^{(\lambda)}(x;{\alpha}, \beta, \gamma)$ which are a kind of generalization of Bell numbers have been extensively studied in \cite{kargin2018higher,kargin2019recurrences}. It is known that their generating function is (see~\cite{kargin2018higher}),
\begin{equation}\label{equation:5}	\sum\limits_{n=0}^{\infty} \omega _n(x;{\alpha}, \beta, \gamma,\lambda)\frac{t^n}{n!}=
	\frac{(1+{\alpha} t)^{\frac{\gamma}{{\alpha}}}}{(1-x((1+{\alpha} t)^{\beta/{\alpha}}-1))^{\lambda}}.\end{equation}
Polynomials related to the polynomials $\omega _n(x;{\alpha}, \beta, \gamma,\lambda)$ have been studied in \cite{mihoubi2017partial}.
\begin{remark}\label{remark:1}
	The total number of ways of distributing $n$ elements into $k+1$ cells satisfying the conditions of Definition~\ref{definition:10} such that each of the first $k$ cells are unlike and are each having $\beta$ compartments, and  each of the $k$ cells is colored with one of $x$ available colours, where $k$ runs from 0 to $n$ is (see~\cite{nkonkobe2020combinatorial});

	\begin{equation}\label{equation:6}
		\omega _n(x;{\alpha}, \beta, \gamma,\lambda)=\sum\limits_{k=0}^{n}\binom{k+\lambda-1}{k}x^kk!\beta^k S(n,k,\alpha,\beta,\gamma).
\end{equation}\end{remark}

\begin{definition}[Derangement\cite{remond1980essai}] Given a permutation $\sigma([n])=\sigma(1)\sigma(2)\cdots\sigma(n)$ of the set $[n]$. The permutation $\sigma$ is said to be a derangement if $\sigma(i)\not=i$ for all $i$, i.e. the permutation has no fixed elements.
	
\end{definition}The number $d_n$ of derangements of $[n]$ is well known in the literature  and has the following well known closed form (for instance \cite{comtet1974advanced,graham1989concrete,wilf2005generatingfunctionology}): \begin{equation}
	d_n=n!\sum\limits_{i=0}^n\frac{(-1)^i}{i!}.
\end{equation}

Consider the partition $\pi=a_1a_2\cdots a_k$ of disjoint subsets of the set $[n]$.  The blocks $a_i$ can be arranged in ascending order based on minimum elements $min\: (a_1)< min\: (a_2)<\cdots<min\: (a_k)$, in this article we will refer to this kind of ordering subsets of a set as the standard form. %An ordered partition $\lambda_{\sigma}$ of $[n]$ is a permutation $\sigma$ of $\pi$. Hence, all permutations of the blocks form distinct ordered partitions $$\lambda_{\sigma}([n])=a_{\sigma(1)}a_{\sigma(2)}\cdots a_{\sigma(k)}.$$
\begin{definition}\label{defintion:20}The $r$-derangement numbers (denoted by $d_{k,r}$)  are the number (see~\cite{wang2017r}) of derangements of the set $[k+r]$ such that the first $r$ elements of $[k+r]$ are in distinct cycles. The $r$-derangement numbers ($d_{k,r}$) have the following generating function (see~\cite{wang2017r});
	\begin{equation}\label{equation:1}
		\sum_{k=0}^{\infty}d_{k,r}\frac{t^k}{k!}=\frac{t^re^{-t}}{(1-t)^{r+1}}.
\end{equation}

The numbers $d_{k,r}$ for $r\in\mathbb{N}$, and $s\in \{1,2,3,\ldots,r\}$ have the following recurrence relation (see \cite{wang2017r})

\begin{equation}\label{equation:88}
	d_{k,r}=\sum\limits_{j=s}^{k}\binom{j-1}{s-1}\frac{k!}{(k-j)!}d_{k-j,r-s}.
\end{equation}\end{definition}

\begin{definition}[Deranged Partition\cite{belbachir2023deranged}] A deranged partition $\lambda_{\sigma}$ of $[n]$ is a derangement $\sigma$ of $\pi=a_1a_2\cdots a_k$,  that is $\lambda_{\sigma}([n])=a_{\sigma(1)}a_{\sigma(2)}\cdots a_{\sigma(n)}$ where ${\sigma(t)}\not={t}$ for all $1\leq t\leq k$.
	
\end{definition}
\begin{definition}[$r$-Deranged Partition\cite{belbachir2023deranged}] An $r$-deranged partition $\lambda_{\sigma}$ of $[n+r]$ is an $r$-derangement $\sigma$ of the set of partitions $a_1a_2\cdots a_{r+k}$,  that means\\ $\lambda_{\sigma}([n+r])=a_{\sigma(1)}a_{\sigma(2)}\cdots a_{\sigma(r)}a_{\sigma(r+1)}\cdots a_{\sigma(k+r)}$ such that $a_{\sigma(t)}\not=a_{t}$ for all $1\leq t\leq r+k$.
	
\end{definition}

\begin{definition}[$r$-Deranged Bell \label{defintion:100} numbers\cite{belbachir2023deranged}]The deranged Bell numbers  (denoted by $\mathcal{B}^{r}_{n}$) combinatorially  (see~\cite{belbachir2023deranged}) give the total number of $r$-deranged partitions of $[n+r]$. 
	
	The numbers  $\mathcal{B}^{r}_{n}$ have the following generating function (see~\cite{belbachir2023deranged});
	
	\begin{equation}\label{equation:2}
		\sum_{n=0}^{\infty}\mathcal{B}^{r}_{n}\frac{t^n}{n!}=\frac{e^{tr}(e^t-1)^re^{-(e^t-1)}}{(2-e^t)^{r+1}}.
\end{equation}\end{definition}

 The numbers   $\mathcal{B}^{r}_{n}$ have the following closed form (see~\cite{belbachir2023deranged});
\begin{equation}
	\mathcal{B}^{r}_{n}=\sum\limits_{i=0}^nd_{i,r}S(n+r,i+r),
\end{equation}

where $S(n+r,i+r)$ are stirling numbers of the second kind. In this study we introduce the following novel generalization of the notion of $r$-deranged Bell numbers $\mathcal{B}^{r}_{n}$
in the following way
\begin{equation}\label{equation:4}
	\sum\limits_{n=0}^{\infty}\mathcal{B}^{r,x}_{n,\lambda}(\alpha,\beta,\gamma)\frac{t^n}{n!}=(1+\alpha t)^{\frac{\gamma}{\alpha}}\frac{[x((1+\alpha t)^\frac{\beta}{\alpha}-1)]^{r\lambda}exp(-\lambda[x((1+\alpha t)^\frac{\beta}{\alpha}-1)])}{[1-x((1+\alpha t)^\frac{\beta}{\alpha}-1)]^{(r+1)\lambda}}.
\end{equation}
We now show few values of the numbers $\mathcal{B}^{r,x}_{n,\lambda}(\alpha,\beta,\gamma)$.
\begin{align*}
	\mathcal{B}^{1,x}_{2,1}(\alpha,\beta,\gamma)&=2x^2\beta^2+2\gamma x\beta+x\beta^2-\alpha x\beta\\
		\mathcal{B}^{2,x}_{3,1}(\alpha,\beta,\gamma)&=6x^2\beta^3+6x^2\beta^2\gamma-8x^2\beta^2\alpha+12x^3\beta^3\\
	\mathcal{B}^{3,x}_{4,1}(\alpha,\beta,\gamma)&=36x^3\beta^4+24x^3\beta^3\gamma-48x^3\beta^3\alpha+72x^4\beta^4.\\
		\mathcal{B}^{4,x}_{5,1}(\alpha,\beta,\gamma)&=240x^4\beta^5+120x^4\beta^4\gamma-312x^4\beta^4\alpha+480x^5\beta^5.
\end{align*}

For the special case where $\alpha$ tends to zero and $(\beta,\gamma,\lambda,x)=(1,r,1,1)$ in \eqref{equation:4} we get the generating function in \eqref{equation:2} of the numbers $\mathcal{B}^{r}_{n}$. The main question this study seeks to answer is the following. How can the numbers $\mathcal{B}^{r,x}_{n,\lambda}(\alpha,\beta,\gamma)$ we defined above be interpreted combinatorially? In section  \ref{section:2} we give a combinatorial interpretation of the numbers $\mathcal{B}^{r,x}_{n,\lambda}(\alpha,\beta,\gamma)$ for the case $\lambda=1$, and in Section~\ref{section:3} we propose an answer to this question for arbitrary $\lambda$ in the set of all natural numbers. In our construction of higher order $r$-deranged Bell numbers we make use of the generalized stirling numbers $S(n,k,\alpha,\beta,\gamma)$, which are a generalization of various kinds of special numbers, as also seen in Table~\ref*{table:1} above. Hence, the combinatorial interpretation we propose in this study may be used for studies on combinatorial properties of Bell like numbers and polynomials, by adapting the arguments we give in this study.   In Section~\ref{section:4} we give some asymptotic results. 

%We will show that the polynomials $\mathcal{B}^{r,x}_{n,\lambda}(\alpha,\beta,\gamma)$ give the number of barred preferential arrangements of $[n]$ satisfying the conditions of Definition~\ref{definition:11}, such that in each partition of $[n]$, the first $r$ blocks (ranked by minimum element) are not in the same cycle.
\section{Degenerate $r$-deranged Bell numbers\label{section:2}}
In this case, we consider $\lambda=1$ in \eqref{equation:4}. For the case $\lambda=1$ we refer to the numbers $\mathcal{B}^{r,x}_{n,1}(\alpha,\beta,\gamma)$ in \eqref{equation:4} as degenerate $r$-deranged Bell numbers. We let $\mathbb{N}$ denote the set of natural numbers, and $\mathbb{N}_0$ denote the set of non-negative integers. What we want to achieve is ordered set partitions with no fixed blocks. In the previous section we considered generalized set partitions whose cardinality is  $k!\beta^kS(n,k,\alpha,\beta,\gamma)$. In order for us to obtain ordered set partitions without fixed blocks we restrict ourselves to the following subset. 
\begin{definition}\label{definition:11}For $\alpha,\beta,\gamma\in\mathbb{N}_0$ such that $\alpha|\beta$, and $\alpha|\gamma$ we denote by $d_{k,r}\beta^{k} S(n,k,\alpha,\beta,\gamma)$	the number of partitions of $[n+r]$ into $k+r+1$ blocks one at a time, such that the following conditions are satisfied:
\begin{enumerate}
	\item of the first $k+r$ blocks, the first $r$ blocks must be singleton blocks formed by the first $r$ elements of $[n+r]$, 
	
	\item the $k$ other blocks are like blocks where each of the $k$ blocks has $\beta$ labelled compartments, and all the $k+r$ blocks must be none empty, the $k$ blocks will be referred to as the $\beta$-blocks,
%	\item when linearly ordering the $k+r$ blocks the $r$ singletons must be in different cycles,
	
\item the $(k+r+1)th$ block has $\gamma$ labelled compartments, this is the only block that may be empty,
	
%	\item the first $r$ subsets of $[n]$ are in different cells,
	%\item The $k+r$ subsets are  
	%\item there are $k$ like cells, and 1 unlike cell,
%	\item each of the $k$ like cells contains $\beta$ labelled compartments (these cells will be referred to as \textit{ordinary cells}),
%	\item  the one unlike cell contains $\gamma$ labelled compartments (the cell will be referred to as the \textit{special cell}),
	\item  in each of the blocks, compartments have cyclic ordered numbering,
	\item each compartment may be occupied by at most one element,
	\item in each consecutive available $\alpha$ compartments only the first compartment may be occupied by an element,
		\item elements are placed into blocks having compartments one at a time,
%	\item the first $r$ elements of $[n+r]$ are in different blocks,
	\item the first $k+r$ blocks are linearly ordered such that the $r$ singletons are in different cycles, and none of the blocks are fixed, where the standard form is used as point of reference.
%	\item $\lambda$ bars are inserted in-between the linearly ordered first $k+r$ blocks to form a barred preferential arrangement.
\end{enumerate}
\end{definition}

We will refer to the partitions in Definition~\ref{definition:11} as  $(x,\alpha,\beta,1,r)$-partitions.
%For the case $(r,x,\alpha,\beta,\gamma)=(0,1,\alpha,\beta,\gamma)$, Equation~\eqref{equation:101} becomes Equation~\eqref{equation:3}.
%Also, comparing \eqref{equation:101} with \eqref{equation:3} we have that 
%\begin{equation}
%	k!x^{k+r}\beta^{k+r}S(n+r,k+r,\alpha,\beta,\gamma)_r=x^{k+r}\beta^{k+r}(k+r)!S(n,k+r,\alpha,\beta,\gamma).
%\end{equation}

\begin{remark}\label{remark:3}Note the difference between the generalized stirling numbers \\ $k!\beta^kS(n,k,\alpha,\beta,\gamma)$ considered in the previous section, and the ones we propose \\$d_{k,r}\beta^{k} S(n,k,\alpha,\beta,\gamma)$	in Definition~\ref{definition:11} above. In both cases we have the terms $\beta^kS(n,k,\alpha,\beta,\gamma)$ which have been well studied in the literature (see for instance \cite{benyi2024combinatorial,kargin2018higher,nkonkobe2020combinatorial}). The only difference is the $k!$ replaced with $d_{k,r}$. It is necessary to replace the $k!$ with the derangement numbers $d_{k,r}$. This is so that we exclude from counting those arrangements that have fixed blocks which we are not interested on in this paper. As we are only interested in partitions with no fixed blocks. You may have noticed that in Definition~\ref{definition:110} reference is made to $[n]$ elements when referring to the numbers $\beta^k S(n,k,\alpha,\beta,\gamma)$, whereas in Definition~\ref{definition:11} above we refer to them with reference to the set $[n+r]$, well when counting we get the same number of unordered partitions $\beta^kS(n,k,\alpha,\beta,\gamma)$. On the combinatorial model in Definition~\ref{definition:11} the first $r$ elements of $[n+r]$ are singletons. The number of outcomes only differs when we start ordering the blocking. 
	
\end{remark}
Definition~\ref{definition:11} only caters for partitions for a fixed value of $k$. To cater for all possible partitions we define the following.

\begin{definition}\label{definition:5}For $\alpha,\beta,\gamma\in\mathbb{N}_0$ such that $\alpha|\beta$, and $\alpha|\gamma$ we let $\mathcal{B}^{r,x}_{n,1}(\alpha,\beta,\gamma)$ denote the total number of all possible  $(x,\alpha,\beta,1,r)$-partitions of the set $[n+r]$, where the blocks 	each having $\beta$ compartments are independently each colored with one of $x$ available colors.  
\end{definition}
\begin{remark}The polynomials $\mathcal{B}^{r,x}_{n,1}(\alpha,\beta,\gamma)$ we defined in Definition~\ref{definition:5} are an extension of the $r$-Deranged Bell numbers $\mathcal{B}^{r}_{n}$ discussed in 
the previous section (also studied in \cite{belbachir2023deranged})). The $r$-Deranged Bell numbers $\mathcal{B}^{r}_{n}$ are the special case $\lim\limits_{\alpha\rightarrow0}\mathcal{B}^{r,1}_{n,1}(\alpha,1,r)=\mathcal{B}^{r}_{n}$. Combinatorially the extension $\mathcal{B}^{r,x}_{n,1}(\alpha,\beta,\gamma)$ of the $r$-Deranged Bell numbers requires that cells to have compartments, and the blocks being colored with $x$ colors which is not the case for the $r$-Deranged Bell numbers $\mathcal{B}^{r}_{n}$. The numbers of partitions we described in Definition~\ref{definition:5} are a subset of those given by the polynomials $\omega _n(x;{\alpha}, \beta, \gamma,1)$ we discussed in the previous section (also studied in \cite{kargin2018higher,kargin2019recurrences}). The number of partitions given by $\omega _n(x;{\alpha}, \beta, \gamma,1)$ are all possible arrangements with and without fixed blocks. As expected $\omega _n(x;{\alpha}, \beta, \gamma,1)=\sum\limits_{k=0}^{n}x^kk!\beta^k S(n,k,\alpha,\beta,\gamma)$ has a $k!$ term. That is the main difference between the two types of polynomials. We will shortly see in Theorem~\ref{theorem:5} that this $k!$ term on the polynomials $\mathcal{B}^{r,x}_{n,1}(\alpha,\beta,\gamma)$ is replaced with the derangement numbers $d_{k,r}$, to ensure we only capture partitions with no fixed blocks.
\end{remark}

In the following theorem we propose a closed form expression for the numbers $\mathcal{B}^{r,x}_{n,1}(\alpha,\beta,\gamma)$.
\begin{theorem} \label{theorem:5}For $\gamma,\beta,r\in\mathbb{N}_0$, and $\alpha\in \mathbb{N}$, where ${\alpha}|\beta$ and ${\alpha}|\gamma$ we have
	%\begin{equation}\label{equation:45}	\sum\limits_{n=0}^\infty\mathcal{B}^{r,x}_{n,1}(\alpha,\beta,\gamma)\frac{t^n}{n!}=(1+\alpha t)^{\frac{\gamma}{\alpha}}\frac{[x((1+\alpha t)^\frac{\beta}{\alpha}-1)]^re^{-[x((1+\alpha t)^\frac{\beta}{\alpha}-1)]}}{(1-[x((1+\alpha t)^\frac{\beta}{\alpha}-1)])^{r+1}}.\end{equation}
		\begin{equation}\label{definition:100}
		\mathcal{B}^{r,x}_{n,1}(\alpha,\beta,\gamma)=\sum_{k=0}^{n}d_{k,r}x^{k}\beta^{k} S(n,k,\alpha,\beta,\gamma).
	\end{equation}
\end{theorem}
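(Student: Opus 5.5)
The proof will be combinatorial, a sum over the number $k$ of $\beta$-blocks, with a short generating-function check at the end. By Definition~\ref{definition:5}, $\mathcal{B}^{r,x}_{n,1}(\alpha,\beta,\gamma)$ counts all $(x,\alpha,\beta,1,r)$-partitions of $[n+r]$ with colored $\beta$-blocks. First I partition this set according to the number $k$ of nonempty $\beta$-blocks. The $r$ singletons use up only the first $r$ elements, so the remaining $n$ elements go into the $k$ $\beta$-blocks and the special $\gamma$-block. Hence $0\le k\le n$, and the sum in \eqref{definition:100} has the correct range.

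Fix $k$. I would build each configuration in three independent stages.
\begin{enumerate}
\item Distribute the $n$ non-fixed elements into $k$ like nonempty $\beta$-blocks and the special $\gamma$-block, one element at a time, under the compartment rules (cyclic numbering, at most one element per compartment, only the first of each $\alpha$ consecutive available compartments occupied). By Definition~\ref{definition:10} and Remark~\ref{remark:3} there are $\beta^kS(n,k,\alpha,\beta,\gamma)$ ways. The $r$ singletons have no compartments and do not affect this count.
\item Color each of the $k$ $\beta$-blocks with one of $x$ colors, giving $x^k$ ways, as in Definition~\ref{definition:110}.
\item Linearly order the $k+r$ blocks (singletons together with $\beta$-blocks).
\end{enumerate}

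For stage 3, list the $k+r$ blocks in standard form, so that the $r$ singletons $\{1\},\dots,\{r\}$ come first because they hold the smallest minima. An ordering is then a permutation $\sigma$ of $[k+r]$. Condition (8) of Definition~\ref{definition:11} requires two things: no block is fixed, so $\sigma$ is a derangement, and the $r$ singletons, i.e. the first $r$ labels, lie in distinct cycles. By Definition~\ref{defintion:20} these permutations number exactly $d_{k,r}$. Stages 1 and 2 do not depend on the ordering, and the ordering turns the like $\beta$-blocks into distinguishable positioned blocks, just as $k!$ does in Remark~\ref{remark:1}. So the product rule gives $d_{k,r}x^k\beta^kS(n,k,\alpha,\beta,\gamma)$ configurations for each $k$, and summing over $k$ gives \eqref{definition:100}.

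The main obstacle is justifying the correspondence in stage 3. Replacing $k!$ by $d_{k,r}$ is only valid if the standard-form labeling is intrinsic, meaning it is determined by block minima and does not depend on how the elements were placed. It must also give a bijection between orderings of the $k$ like blocks together with the $r$ singletons and permutations of $[k+r]$, with no over- or under-counting of the like blocks. I would argue this by noting that nonempty disjoint blocks have distinct minima, so they are canonically labeled.

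As a consistency check linking the result to \eqref{equation:4} with $\lambda=1$, I would multiply \eqref{equation:3} by $\beta^k x^k d_{k,r}/k!$ and sum over $k$. Setting $u=x((1+\alpha t)^{\beta/\alpha}-1)$, this gives $(1+\alpha t)^{\gamma/\alpha}\sum_k d_{k,r}u^k/k!$. Applying \eqref{equation:1} turns this into $(1+\alpha t)^{\gamma/\alpha}u^re^{-u}/(1-u)^{r+1}$, which is exactly \eqref{equation:4} at $\lambda=1$.
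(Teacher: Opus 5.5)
Your proposal is correct and follows the paper's own route. The paper's combinatorial proof is your three-stage count: $\beta^kS(n,k,\alpha,\beta,\gamma)$ unordered configurations, $x^k$ colourings and $d_{k,r}$ admissible orderings of the $k+r$ blocks, multiplied and summed over $k$; its analytic proof is your generating-function check, substituting $u=x((1+\alpha t)^{\beta/\alpha}-1)$ into \eqref{equation:1} and comparing with \eqref{equation:3} and \eqref{equation:4}. Your observation that distinct block minima make the standard-form labelling canonical, so that every unordered configuration admits exactly $d_{k,r}$ orderings, makes explicit a point the paper leaves implicit.
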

\begin{proof} By Equation~\eqref{equation:3} we have the following generating function for the generalized stirling numbers $S(n,k,\alpha,\beta,\gamma)$, \begin{equation}\label{equation:322}
		\frac{[((1+\alpha t)^\frac{\beta}{\alpha}-1)]^k}{\beta^k}(1+\alpha t)^{\frac{\gamma}{\alpha}}=k!\sum\limits_{n=0}^{\infty}S(n,k,\alpha,\beta,\gamma)\frac{t^n}{n!}.
	\end{equation}
	
	Also, from Definition~\ref{equation:1} we have the following generating function for the $r$-derangement numbers $d_{k,r}$, 
		\begin{equation}\label{equation:323}
		\sum_{k=0}^{\infty}d_{k,r}\frac{t^k}{k!}=\frac{t^re^{-t}}{(1-t)^{r+1}}.
	\end{equation}

	Equation~\eqref{equation:4} implies the following,

	\begin{equation}\label{equation:324}(1+\alpha t)^{\frac{\gamma}{\alpha}}\frac{[x((1+\alpha t)^\frac{\beta}{\alpha}-1)]^re^{-[x((1+\alpha t)^\frac{\beta}{\alpha}-1)]}}{(1-[x((1+\alpha t)^\frac{\beta}{\alpha}-1)])^{r+1}}=\sum\limits_{k=0}^{\infty}d_{k,r}\frac{[x((1+\alpha t)^\frac{\beta}{\alpha}-1)]^k}{k!}(1+\alpha t)^{\frac{\gamma}{\alpha}}. \end{equation}

	To see why \eqref{equation:324} is correct we make the substitution $t=x(1+\alpha t)^\frac{\beta}{\alpha}-1$ into \eqref{equation:323}, and compare with \eqref{equation:4}. Equation~\ref{equation:324} implies 
	\begin{align*}
		\sum\limits_{n=0}^\infty	\mathcal{B}^{r,x}_{n,1}(\alpha,\beta,\gamma)\frac{t^n}{n!}=&\sum\limits_{k=0}^{\infty}d_{k,r}\sum\limits_{n=0}^{\infty}\beta^{k}x^{k}S(n,k,\alpha,\beta,\gamma)\frac{t^n}{n!}\\=&\sum\limits_{n=0}^{\infty}\begin{bmatrix}\sum\limits_{k=0}^nd_{k,r}x^{k}\beta^{k}S(n,k,\alpha,\beta,\gamma)\end{bmatrix}\frac{t^n}{n!}.
	\end{align*}
	
	Thus, 
		\begin{equation}
		\mathcal{B}^{r,x}_{n,1}(\alpha,\beta,\gamma)=\sum_{k=0}^{n}d_{k,r}x^{k}\beta^{k} S(n,k,\alpha,\beta,\gamma).
	\end{equation}
	
\end{proof}

We now provide a combinatorial proof of Theorem~\ref{theorem:5}.

\begin{proof}The $(x,\alpha,\beta,1,r)$-partitions of $[n+r]$ can be formed in the following way. We partition $[n+r]$ into $k+r+1$ subsets such that the first $r$ elements of $[n+r]$ are singletons, $k$ other blocks are each having $\beta$ compartments, and one block has $\gamma$ compartments. The number of these partitions of $[n+r]$ into the $k+r+1$ blocks is $\beta^{k} S(n,k,\alpha,\beta,\gamma)$  (see Definition~\ref{definition:10}). The $k$ blocks having $\beta$ compartments each can be colored independently with $x$ colors in $x^k$ ways. The number of ways such that the $k+r$ blocks other than the block having $\gamma$ compartments may be linearly ordered  such that the $r$ singletons are in different cycles, and none of the blocks are fixed is given by the $r$-derangement numbers in $d_{k,r}$ (see Definition~\ref{defintion:20}). Taking the product and summing over $k$ to get the total number of $(x,\alpha,\beta,1,r)$-partitions we get $	\mathcal{B}^{r,x}_{n,1}(\alpha,\beta,\gamma)=\sum\limits_{k=0}^{n}d_{k,r}x^{k}\beta^{k} S(n,k,\alpha,\beta,\gamma)$.   
\end{proof}

\section{The general case: $\lambda$ an arbitrary positive integer\label{section:3}}

In this section we discus the general case in \eqref{equation:4} i.e. $\lambda$ being an arbitrary positive integer. We propose a combinatorial interpretation of the numbers $\mathcal{B}^{r,x}_{n,\lambda}(\alpha,\beta,\gamma)$. As a generalization of the combinatorial interpretation we have given of the numbers $\mathcal{B}^{r,x}_{n,1}(\alpha,\beta,\gamma)$, we give the following combinatorial interpretation of the numbers  $\mathcal{B}^{r,x}_{n,\lambda}(\alpha,\beta,\gamma)$.

\begin{definition}\label{definition:270}For $\alpha,\beta,\gamma\in\mathbb{N}_0$ such that $\alpha|\beta$, and $\alpha|\gamma$ we denote by $\mathcal{B}^{r,x}_{n,\lambda}(\alpha,\beta,\gamma)$	the number of partitions of $[n+r\lambda]$ into $\lambda+1$ sections where on the $ith$ section there are $\lambda_i+r$ elements such that $\lambda_1+\lambda_2+\cdots+\lambda_{\lambda+1}=n$  such that the following conditions are satisfied:
	
\begin{itemize}\item 	elements going into the first $\lambda$ sections satisfy the following conditions
	\begin{enumerate}
		\item as each section has $\lambda_i+r$ (for $i=1,2,3,\ldots,\lambda$) elements we will refer to the $r$ elements as special elements, 
		\item in each section the $r$ special elements are fixed, hence we have $r\lambda$ fixed elements, $r$ of them per section,
	%	\item from the set  $[n+r\lambda]$ it is only the $n$ other elements that may belong to any of the sections,
		\item on each section the $r$ elements form $r$ singleton blocks,
		\item on the $ith$ section elements are partitioned into $k_i+r$ blocks, where the first $r$ blocks are singleton blocks formed by the special $r$ elements,
		%\item when linearly ordering the $k+r$ blocks the $r$ singletons must be in different cycles, 
		\item the $k_i$ blocks each having $\beta$ compartments,
		\item each of the $k_i$ blocks is nonempty,
		\item each of the $k_i$ blocks is colored independently with one of $x$ available colors, 
		\item elements are placed into each of the $k_i$ blocks one at a time,
		
		%	\item the first $r$ subsets of $[n]$ are in different cells,
		%\item The $k+r$ subsets are  
		%\item there are $k$ like cells, and 1 unlike cell,
		%	\item each of the $k$ like cells contains $\beta$ labelled compartments (these cells will be referred to as \textit{ordinary cells}),
		%	\item  the one unlike cell contains $\gamma$ labelled compartments (the cell will be referred to as the \textit{special cell}),
		\item  in each of the $k_i$ blocks, compartments have cyclic ordered numbering,
		\item in each of the $k_i$ blocks each compartment may be occupied by at most one element,
		\item on each of the $k_i$ blocks in each consecutive available $\alpha$ compartments only the first compartment may be occupied by an element,
		%	\item the first $r$ elements of $[n+r]$ are in different blocks,
		\item in each of the sections the $k_i+r$ blocks are linearly ordered such that the $r$ singletons are in different cycles, and none of the blocks are fixed where the standard form is used as point of reference, 
		\item where each $k_i$ running from 0 to $\lambda_i$,
		%\item $\lambda-1$ bars are inserted in-between the linearly ordered first $k+r$ blocks to form a barred preferential arrangement.
	\end{enumerate}
\item	the $(\lambda+1)th$ section has $\gamma$ labelled compartments, where the compartments have cyclic ordered numbering, and on each consecutive available $\alpha$ compartments only the first compartment may be occupied by an element.
	\end{itemize}
\end{definition}

We will now refer to the partitions in Definition~\ref{definition:270} as $(x,\alpha,\beta,\lambda,r)$-partitions.

\begin{theorem}For $\gamma,\beta,r,\lambda\in\mathbb{N}_0$, and $\alpha\in \mathbb{N}$, such that ${\alpha}|\beta$ and ${\alpha}|\gamma$ we have\label{theorem:33}\fontsize{10}{1}
	\begin{equation}
		\mathcal{B}^{r,x}_{n,\lambda}(\alpha,\beta,\gamma)=\sum\limits_{\lambda_1+\lambda_2+\cdots+\lambda_{\lambda+1}=n}\binom{n}{\lambda_1,\lambda_2,\ldots,\lambda_{\lambda+1}}(\gamma|\alpha)_{\lambda_{\lambda+1}}\prod_{i=1}^{\lambda}\sum_{k_i=0}^{\lambda_i}d_{k_i,r}x^{k_i}\beta^{k_i} S(\lambda_i,k_i,\alpha,\beta,\gamma).
	\end{equation}
\end{theorem}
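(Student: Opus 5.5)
The plan is a combinatorial proof that reads $\mathcal{B}^{r,x}_{n,\lambda}(\alpha,\beta,\gamma)$ through Definition~\ref{definition:270}. Each $(x,\alpha,\beta,\lambda,r)$-partition of $[n+r\lambda]$ will be decomposed into independent pieces, one per section. The count for each piece then comes either from Theorem~\ref{theorem:5} or from the factorial-polynomial count for the $\gamma$-compartment cell.

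\emph{Step 1: distributing the free elements.} The $r\lambda$ special elements are fixed, with $r$ of them pre-assigned to each of the first $\lambda$ sections. So only the remaining $n$ elements have to be distributed. Fix a composition $\lambda_1+\lambda_2+\cdots+\lambda_{\lambda+1}=n$, where $\lambda_i$ is the number of non-special elements landing in section $i$. The number of ways to choose which elements go to which section is the multinomial coefficient $\binom{n}{\lambda_1,\lambda_2,\ldots,\lambda_{\lambda+1}}$. After this choice, the sections are filled independently, so the count for this composition is the multinomial times the product of the per-section counts. Summing over all compositions gives the outer sum of the statement.

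\emph{Step 2: the $(\lambda+1)$th section.} This section is a single cell with $\gamma$ cyclically numbered compartments. Its $\lambda_{\lambda+1}$ elements are inserted one at a time, and after each occupied compartment the next $\alpha-1$ available ones are blocked. The first element then has $\gamma$ choices, the next $\gamma-\alpha$, and so on. This gives $\prod_{j=0}^{\lambda_{\lambda+1}-1}(\gamma-j\alpha)=(\gamma|\alpha)_{\lambda_{\lambda+1}}$, which is the same counting that underlies the special cell in Definition~\ref{definition:10}.

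\emph{Step 3: the first $\lambda$ sections.} Section $i$ contains the $r$ special singletons together with $\lambda_i$ further elements. These form $k_i$ nonempty $x$-coloured $\beta$-blocks, with compartments filled as in Definition~\ref{definition:10}, and the $k_i+r$ blocks are linearly ordered with no fixed block and the $r$ singletons in distinct cycles. I would check condition by condition that this is exactly an $(x,\alpha,\beta,1,r)$-partition in the sense of Definition~\ref{definition:11}, with $n$ replaced by $\lambda_i$. This includes the accompanying $\gamma$-compartment cell counted by the unordered factor $\beta^{k_i}S(\lambda_i,k_i,\alpha,\beta,\gamma)$, exactly as in the combinatorial proof of Theorem~\ref{theorem:5}. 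Theorem~\ref{theorem:5} then gives the number of fillings of section $i$ as $\mathcal{B}^{r,x}_{\lambda_i,1}(\alpha,\beta,\gamma)=\sum_{k_i=0}^{\lambda_i}d_{k_i,r}x^{k_i}\beta^{k_i}S(\lambda_i,k_i,\alpha,\beta,\gamma)$. Here the factor $d_{k_i,r}$ counts the admissible orderings (Definition~\ref{defintion:20}), $x^{k_i}$ counts the colourings, and $\beta^{k_i}S(\lambda_i,k_i,\alpha,\beta,\gamma)$ counts the compartment placements. Multiplying over $i=1,\dots,\lambda$, and then by the factors from Steps 1 and 2, yields the claimed identity.

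\emph{Main obstacle.} The delicate point is Step 3: justifying that each section's internal structure is precisely the model of Definition~\ref{definition:11}, so that the parameter $\gamma$ legitimately appears inside $S(\lambda_i,k_i,\alpha,\beta,\gamma)$. The sections must also be shown to be genuinely independent once the multinomial split is fixed, with no double counting across different choices of the $k_i$. I would first make the bijection explicit: a configuration maps to the tuple (composition, section-wise $(x,\alpha,\beta,1,r)$-partitions, filling of the last cell). I would then verify that it is invertible, because the special elements identify their section and the standard form fixes the reference order within each section.
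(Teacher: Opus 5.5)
\textbf{Step~3 fails, and the identity as stated is false for $\gamma\neq0$.} Your decomposition matches the paper's proof: a multinomial split of the $n$ free elements, $(\gamma|\alpha)_{\lambda_{\lambda+1}}$ for the last section, and a per-section factor summed over $k_i$. But it breaks at exactly the point you flagged as delicate.

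In Definition~\ref{definition:270} the first $\lambda$ sections contain only the $r$ special singletons and the $k_i$ nonempty $\beta$-blocks. The only cell with $\gamma$ compartments is the $(\lambda+1)$th section, and that section already plays the role of the extra block of Definition~\ref{definition:11}. So section $i$ is an $(x,\alpha,\beta,1,r)$-partition of $[\lambda_i+r]$ only with $\gamma$ replaced by $0$. Its number of fillings is therefore $\sum_{k_i}d_{k_i,r}x^{k_i}\beta^{k_i}S(\lambda_i,k_i,\alpha,\beta,0)=\mathcal{B}^{r,x}_{\lambda_i,1}(\alpha,\beta,0)$.

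Writing $S(\lambda_i,k_i,\alpha,\beta,\gamma)$ instead gives every section its own extra $\gamma$-cell. Note that $S(m,0,\alpha,\beta,\gamma)=(\gamma|\alpha)_m\neq0$, so section $i$ could even contain elements lying in no $\beta$-block. The right-hand side thus counts $\lambda+1$ independent $\gamma$-cells. In generating-function terms, with $u=x((1+\alpha t)^{\beta/\alpha}-1)$, the right-hand side has exponential generating function $(1+\alpha t)^{\gamma/\alpha}[(1+\alpha t)^{\gamma/\alpha}u^re^{-u}(1-u)^{-r-1}]^{\lambda}$. By \eqref{equation:4} this generates $\mathcal{B}^{r,x}_{n,\lambda}(\alpha,\beta,(\lambda+1)\gamma)$, not $\mathcal{B}^{r,x}_{n,\lambda}(\alpha,\beta,\gamma)$.

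For a concrete counterexample take $n=\lambda=1$, $r=0$, $\alpha=\beta=\gamma=1$. Both \eqref{equation:4} and a direct count from Definition~\ref{definition:270} give $1$. The single element cannot lie in section $1$, because one block alone is always fixed ($d_{1,0}=0$), so it must occupy the unique $\gamma$-compartment. The displayed right-hand side, however, equals $S(1,0,1,1,1)+(1|1)_1=1+1=2$. So no bijection of the kind you plan to make explicit can exist in general.

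The paper's own proof asserts the same per-section count $x^{k_i}\beta^{k_i}S(\lambda_i,k_i,\alpha,\beta,\gamma)$ and has the same defect. Your argument goes through verbatim once $\gamma$ is replaced by $0$ inside each $S(\lambda_i,k_i,\cdot)$. The resulting correct identity is exactly Theorem~\ref{theorem:3}, consistent with the convolution \eqref{equation:40}.
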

\begin{proof} The $(x,\alpha,\beta,\lambda,r)$-partitions of $[n+r\lambda]$ can be formed in the following way. The number of elements that are in each of the $\lambda+1$ sections can be chosen in $\binom{n}{\lambda_1,\lambda_2,\ldots,\lambda_{\lambda+1}}$ ways. Given $\lambda_i+r$ elements in the $ith$ section. The elements can be arranged into $k_i+r$ subsets where $r$ fixed elements are singletons, and each of the $k_i$ subsets has $\beta$ compartments in $x^{k_i}\beta^{k_i} S(\lambda_i,k_i,\alpha,\beta,\gamma)$ ways. The $k_i+r$ subsets can be linearly ordered in $d_{k_i,r}$ ways, such that the $r$ singletons are in different cycles. Summing over $k_i$ gives the total number of ways of arranging the elements within the section. Hence, for all the $\lambda$ sections, the total number of ways of arranging the elements in them is $\prod_{i=1}^{\lambda}\sum_{k_i=0}^{\lambda_i}d_{k_i,r}x^{k_i}\beta^{k_i} S(\lambda_i,k_i,\alpha,\beta,\gamma)$. On the $(\lambda+1)th$ section, $\lambda_{\lambda+1}$ elements can be arranged among the $\gamma$ compartments in $(\gamma|\alpha)_{\lambda_{\lambda+1}}$ ways. Considering all possible solutions of the equation $\lambda_1+\lambda_2+\cdots+\lambda_{\lambda+1}=n$ in non-negative integers completes the proof.
\end{proof}

Theorem~\ref{theorem:33} can be thought as a deranged generalization of  Theorem~3.8 of \cite{nkonkobe2020combinatorial}.
%\begin{theorem}For %$\gamma,\beta,r,\lambda\in\mathbb{N}_0$, and $\alpha\in \mathbb{N}$, where ${\alpha}|\beta$ and ${\alpha}|\gamma$ we have
%	\begin{equation}
%		\mathcal{B}^{r,x}_{n,\lambda}(\alpha,\beta,\gamma)=\sum_{k=0}^{n}\begin{bmatrix}
%			\binom{k+\lambda-2}{k}+\binom{k+\lambda-2}{k-1}
%		\end{bmatrix}d_{k,r}\beta^k S(n,k,\alpha,\beta,\gamma).
%	\end{equation}
%\end{theorem}
%\begin{proof}
%	Combinatorial proof to be inserted.
%\end{proof}

\begin{theorem}\label{theorem:3}For $\gamma,\beta,r,\lambda\in\mathbb{N}_0$, and $\alpha\in \mathbb{N}$, where ${\alpha}|\beta$ and ${\alpha}|\gamma$ we have
	\begin{equation}\label{equation:2000}
		\mathcal{B}^{r,x}_{n,\lambda}(\alpha,\beta,\gamma)=\sum_{\lambda_1+\lambda_2+\cdots+\lambda_{\lambda+1}=n}\binom{n}{\lambda_1,\lambda_2,\ldots,\lambda_{\lambda+1}}(\gamma|\alpha)_{\lambda_{\lambda+1}}\prod_{j=1}^{\lambda}	\mathcal{B}^{r,x}_{\lambda_{j},1}(\alpha,\beta,0).
	\end{equation}
\end{theorem}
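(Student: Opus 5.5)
Theorem~\ref{theorem:3} is obtained from Theorem~\ref{theorem:33} by recognising each inner sum as a degenerate $r$-deranged Bell number with $\gamma=0$, once we check that the $\gamma$ appearing inside the product is inessential. The most transparent route is through generating functions. Write $u(t)=x((1+\alpha t)^{\beta/\alpha}-1)$. By \eqref{equation:4}, the exponential generating function of $\mathcal{B}^{r,x}_{n,\lambda}(\alpha,\beta,\gamma)$ factors as
\begin{equation*}
(1+\alpha t)^{\gamma/\alpha}\left[\frac{u(t)^{r}e^{-u(t)}}{(1-u(t))^{r+1}}\right]^{\lambda}.
\end{equation*}
Setting $\gamma=0$ and $\lambda=1$ in \eqref{equation:4} shows that the bracketed expression is exactly $\sum_{m\ge0}\mathcal{B}^{r,x}_{m,1}(\alpha,\beta,0)\,t^m/m!$.

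The remaining factor $(1+\alpha t)^{\gamma/\alpha}$ expands by the binomial series as $\sum_{m}\binom{\gamma/\alpha}{m}\alpha^m t^m=\sum_m (\gamma|\alpha)_m\,t^m/m!$. This holds because $\alpha^m(\gamma/\alpha)(\gamma/\alpha-1)\cdots(\gamma/\alpha-m+1)=\prod_{i=0}^{m-1}(\gamma-i\alpha)$.

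The EGF is therefore a product of $\lambda+1$ exponential generating functions: $\lambda$ copies of the $\gamma=0$, $\lambda=1$ series and one copy of $\sum_m(\gamma|\alpha)_m t^m/m!$. Extracting the coefficient of $t^n/n!$ from such a product gives the sum over compositions $\lambda_1+\cdots+\lambda_{\lambda+1}=n$ of the multinomial coefficient times the product of the individual coefficients. This is precisely \eqref{equation:2000}.

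A combinatorial version runs parallel to the proof of Theorem~\ref{theorem:33}. Choose which of the $n$ non-special elements go to each of the $\lambda+1$ sections, in $\binom{n}{\lambda_1,\ldots,\lambda_{\lambda+1}}$ ways. Place the $\lambda_{\lambda+1}$ elements of the last section into its $\gamma$ compartments, in $(\gamma|\alpha)_{\lambda_{\lambda+1}}$ ways. Then observe that section $i$ is exactly an $(x,\alpha,\beta,1,r)$-partition of $\lambda_i+r$ elements with no $\gamma$-block, i.e.\ with $\gamma=0$ so that the special block is forced to be empty. By Definition~\ref{definition:5} these are counted by $\mathcal{B}^{r,x}_{\lambda_i,1}(\alpha,\beta,0)$.

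The main obstacle is the bookkeeping between Theorem~\ref{theorem:33}, which carries $S(\lambda_i,k_i,\alpha,\beta,\gamma)$ inside the product, and Theorem~\ref{theorem:3}, which has $\gamma=0$ there. Combinatorially, section $i$ has no $\gamma$-compartment block, so the correct inner count is $S(\lambda_i,k_i,\alpha,\beta,0)$. By Theorem~\ref{theorem:5} with $\gamma=0$, the inner sum is then exactly $\mathcal{B}^{r,x}_{\lambda_i,1}(\alpha,\beta,0)$. I would state this identification explicitly and rely on the generating-function argument above, which is unambiguous, to confirm the formula.
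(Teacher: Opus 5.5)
Your proof is correct, and your combinatorial paragraph is essentially the paper's own proof. The paper places the bars first and chooses the section contents by the multinomial coefficient. It counts the last section by $(\gamma|\alpha)_{\lambda_{\lambda+1}}$ and each of the other sections by $\mathcal{B}^{r,x}_{\lambda_j,1}(\alpha,\beta,0)$ via Definition~\ref{definition:5}. It justifies $\gamma=0$ exactly as you do: the $\gamma$-compartments occur only in their own section.

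Your primary, generating-function argument is the convolution view that the paper only mentions after its proof, via \eqref{equation:40}. You make it complete by checking $(1+\alpha t)^{\gamma/\alpha}=\sum_m(\gamma|\alpha)_m t^m/m!$. Since the numbers are introduced through \eqref{equation:4}, your route proves the identity directly from the definition. It does not depend on the details of the combinatorial model in Definitions~\ref{definition:11} and~\ref{definition:270}. The paper's counting argument supplies the interpretation, but it reaches the numbers of \eqref{equation:4} only through the identification of the model with \eqref{equation:4} (Theorem~\ref{theorem:5} for $\lambda=1$).

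Your concern about Theorem~\ref{theorem:33} is justified. Put $F(t)=u(t)^re^{-u(t)}/(1-u(t))^{r+1}$ in your notation. With $S(\lambda_i,k_i,\alpha,\beta,\gamma)$ inside the product, the right-hand side of that theorem has exponential generating function $(1+\alpha t)^{(\lambda+1)\gamma/\alpha}F(t)^{\lambda}$, not $(1+\alpha t)^{\gamma/\alpha}F(t)^{\lambda}$. For $r=0$ and $n=\lambda=1$, for instance, it gives $2\gamma$ instead of $\gamma$. So the inner Stirling numbers must carry $\gamma=0$, as you say. The paper's proof of Theorem~\ref{theorem:3} does not go through Theorem~\ref{theorem:33}, so it is unaffected.
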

\begin{proof} In this theorem we view barred preferential arrangements as formed by first placing bars, and then forming blocks inbetween the bars (also noted in \cite{pippenger2010hypercube}). Given $\lambda$ bars they induce $\lambda+1$ sections. In a barred preferential arrangement we denote the number of element in the $jth$ section by $\lambda_j$. On each of the first $\lambda$ sections the number of ways of arranging the $\lambda_j+r$ elements within the section is $\mathcal{B}^{r,x}_{\lambda_{j},1}(\alpha,\beta,0)$ (see Definition~\ref{definition:5}). The reason that $\gamma$ is zero on $\mathcal{B}^{r,x}_{\lambda_{j},1}(\alpha,\beta,0)$ is that the $\gamma$ elements are on their own section on each of the barred preferential arrangements.  The number of ways of choosing elements going into the $\lambda+1$ sections is given by the multinomial coefficient $\binom{n}{\lambda_1,\lambda_2,\ldots,\lambda_{\lambda+1}}$. The $\lambda_{\lambda+1}$ elements within the $(\lambda+1)th$ section can be arranged among the $\gamma$ compartments in $(\gamma|\alpha)_{\lambda_{\lambda+1}}$ ways.
\end{proof}

For the special case $(\alpha,\beta,x)=(0,1,1)$ a variation of Theorem~\ref{theorem:3} has been discussed in (6) of \cite{nkonkobe2017study} or Theorem~3.1 of \cite{nkonkobe2020combinatorial}. Clearly \eqref{equation:2000} also follows from \eqref{equation:4} when we view \eqref{equation:4} as a convolution of $\lambda+1$ generating functions in the following way

\begin{equation}\label{equation:40}
	\sum\limits_{n=0}^{\infty}\mathcal{B}^{r,x}_{n,\lambda}(\alpha,\beta,\gamma)\frac{t^n}{n!}=(1+\alpha t)^{\frac{\gamma}{\alpha}}\prod_{i=1}^{\lambda}\begin{bmatrix}\frac{[x((1+\alpha t)^\frac{\beta}{\alpha}-1)]^{r}exp(-[x((1+\alpha t)^\frac{\beta}{\alpha}-1)])}{[1-x((1+\alpha t)^\frac{\beta}{\alpha}-1)]^{(r+1)}}\end{bmatrix}.
\end{equation}
%\begin{theorem}For %$\gamma,\beta,r,\lambda\in\mathbb{N}_0$, and $\alpha\in \mathbb{N}$, where ${\alpha}|\beta$ and ${\alpha}|\gamma$ we have
%	\begin{equation}
%		\omega _n^{(\lambda)}(x;{\alpha}, \beta, \gamma)=\sum\limits_{i=0}^{n}\binom{n}{i}\mathcal{B}^{0,x}_{i,\lambda}(\alpha,\beta,\gamma)\sum\limits_{l=0}^{n-i}\beta^lS(n-i,l,\alpha,\beta,0)x^l\lambda^{l}.
%	\end{equation}
%\end{theorem}
%\begin{proof}
	
	%We have
	
%	 \begin{align*}\sum\limits_{n=0}^{\infty}\begin{bmatrix}
	%	\sum\limits_{l=0}^{n}\beta^lS(n,l,\a%lpha,\beta,0)x^l\lambda^{n}
%	\end{bmatrix}\frac{t^n}{n!}&=\sum\limits_{l=0}^{\infty}\sum\limits_{n=0}^{\infty}
%	\beta^lS(n,l,\alpha,\beta,0)x^l\lambda^{l}
%\frac{t^n}{n!}\\&=exp(\lambda (x(1+\alpha t)^\frac{\beta}{\alpha}-1))\end{align*}

%This and \eqref{equation:4} complete the proof.
%\end{proof}
\begin{theorem}For $\gamma,\beta,\lambda\in\mathbb{N}_0$, and $\alpha\in \mathbb{N}$, where ${\alpha}|\beta$ and ${\alpha}|\gamma$ we have
	\begin{equation}
		\omega _{n}(x;{\alpha}, \beta, \gamma,\lambda)=\sum\limits_{i=0}^{n}\binom{n}{i}\mathcal{B}^{0,x}_{i,\lambda}(\alpha,\beta,\gamma)\sum\limits_{l=0}^{n-i}\beta^lS(n-i,l,\alpha,\beta,0)x^l\lambda^{l}.
	\end{equation}
\end{theorem}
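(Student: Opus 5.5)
Set $u=x\bigl((1+\alpha t)^{\beta/\alpha}-1\bigr)$. The identity is a product of two exponential generating functions, so the plan is to show that the right-hand side has EGF equal to that of $\omega_n(x;\alpha,\beta,\gamma,\lambda)$ in \eqref{equation:5}. The right-hand side is a binomial convolution. Its EGF is therefore the product of the EGF of $\mathcal{B}^{0,x}_{i,\lambda}(\alpha,\beta,\gamma)$ and the EGF of the sequence $a_m=\sum_{l=0}^{m}\beta^lS(m,l,\alpha,\beta,0)x^l\lambda^l$. Matching coefficients of $t^n/n!$ then finishes the proof.

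\emph{Step 1.} Put $r=0$ in \eqref{equation:4}. This gives
\[
\sum_{i\ge0}\mathcal{B}^{0,x}_{i,\lambda}(\alpha,\beta,\gamma)\frac{t^i}{i!}=(1+\alpha t)^{\gamma/\alpha}\frac{e^{-\lambda u}}{(1-u)^{\lambda}}.
\]

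\emph{Step 2.} Compute the EGF of $a_m$. Apply \eqref{equation:3} with $\gamma=0$; it says $\sum_m\beta^lS(m,l,\alpha,\beta,0)t^m/m!=\bigl((1+\alpha t)^{\beta/\alpha}-1\bigr)^l/l!$. Multiply by $x^l\lambda^l$ and sum over $l\ge0$, interchanging the sums as in the proof of Theorem~\ref{theorem:5}. This gives
\[
\sum_{m}a_m\frac{t^m}{m!}=\sum_{l\ge0}\frac{(\lambda u)^l}{l!}=e^{\lambda u}.
\]
Here the power of $\lambda$ has to be $\lambda^l$, as written in the statement, so that it pairs with the $1/l!$ to form the exponential series.

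\emph{Step 3.} Multiply the two series. The factor $e^{-\lambda u}$ cancels against $e^{\lambda u}$, leaving $(1+\alpha t)^{\gamma/\alpha}(1-u)^{-\lambda}$. This is exactly the generating function \eqref{equation:5} of $\omega_n(x;\alpha,\beta,\gamma,\lambda)$. Comparing coefficients of $t^n/n!$ via the Cauchy product $\sum_i\binom{n}{i}(\cdot)_i(\cdot)_{n-i}$ gives the identity.

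The main thing to watch is Step 2: justifying the double-sum interchange as formal power series, which is fine since $u$ has zero constant term. No step is a genuine obstacle.

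A combinatorial proof is also natural. A structure counted by $\omega_n$ (by Remark~\ref{remark:1}, with $\lambda$ sections) splits into its non-fixed part, counted by $\mathcal{B}^{0,x}_{i,\lambda}$, and its fixed blocks. Each fixed $\beta$-block is colored in $x$ ways and assigned to one of the $\lambda$ sections, which gives $x^l\lambda^l\beta^lS(n-i,l,\alpha,\beta,0)$. I would mention this only as an interpretation, since the generating-function argument is the rigorous route.
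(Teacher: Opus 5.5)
Your proof is correct, but it takes a different route from the paper's. The paper argues purely combinatorially. It reads $\omega_n(x;\alpha,\beta,\gamma,\lambda)$ through Remark~\ref{remark:1} as colored, ordered $\beta$-blocks spread over $\lambda$ sections together with the $\gamma$-block. It then:
\begin{enumerate}
\item selects the $n-i$ elements that make up the fixed blocks;
\item splits them into $l$ blocks in $\beta^lS(n-i,l,\alpha,\beta,0)$ ways;
\item colors these blocks in $x^l$ ways and assigns each to one of the $\lambda$ sections in $\lambda^l$ ways;
\item lets the remaining $i$ elements form a fixed-block-free arrangement, counted by $\mathcal{B}^{0,x}_{i,\lambda}(\alpha,\beta,\gamma)$.
\end{enumerate}
This is exactly the decomposition you sketch in your final paragraph.

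Your main argument instead takes \eqref{equation:4} and \eqref{equation:5} as the definitions of the two sides. It needs only \eqref{equation:3} at $\gamma=0$ and the cancellation $e^{\lambda u}e^{-\lambda u}=1$. That makes it short and airtight as an identity of formal power series. It also avoids two things the paper's argument relies on: the rather informal notion of a block being ``fixed'' with respect to the standard form, and the unchecked claim that the combinatorial models of Remark~\ref{remark:1} and Definition~\ref{definition:270} really have the generating functions \eqref{equation:5} and \eqref{equation:4}.

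What the paper's route offers in return is the explanation behind your cancellation. By \eqref{equation:40} with $r=0$, each of the $\lambda$ sections contributes a factor $e^{-u}/(1-u)$. The identity is then the section-by-section splitting $\frac{1}{1-u}=e^{u}\cdot\frac{e^{-u}}{1-u}$ of a linear order of blocks into its fixed blocks and a derangement of the rest. The $\lambda$ copies of $e^{u}$ combine into the factor $e^{\lambda u}$, which corresponds to the $\lambda^l$ choices of section for the fixed blocks.
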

\begin{proof}In view of Remark~\ref{remark:1} above, when one compares barred preferential arrangements having same blocks certain blocks can be interpreted as being fixed, and some being permuted. From $[n]$ we can choose $n-i$ elements in $\binom{n}{n-i}$ ways. These $(n-i)$ elements are to form the fixed blocks. The number of partitions of $[n-i]$ into $l$ blocks each having $\beta$ compartments satisfying the conditions of Definition~\ref{definition:10} is $\beta^lS(n-i,l,\alpha,\beta,0)$. The $l$ blocks will constitute the fixed blocks in the sections. The $l$ blocks can be colored in $x^l$ ways. The $l$ blocks can then be placed into $\lambda$ sections in $\lambda^l$ ways, where $l$ ranges from 0 to $n-i$. Once these blocks are placed on the respective sections they are fixed. With the remaining $i$ elements, arrangements can be made in which none of the blocks placed inbetween the bars are fixed and the block having the $\gamma$ compartments maybe none empty this can be done in  $\mathcal{B}^{0,x}_{i,\lambda}(\alpha,\beta,\gamma)$ ways.
	
\end{proof}

\iffalse
\begin{theorem}For $\gamma,\beta,r,\lambda\in\mathbb{N}_0$, and $\alpha\in \mathbb{N}$, where ${\alpha}|\beta$ and ${\alpha}|\gamma$ we have
	\begin{equation}
		\omega _n^{(\lambda)}(x;{\alpha}, \beta, \gamma)=\sum\limits_{i=0}^{n}\binom{n}{i}\mathcal{B}^{0,x}_{i,\lambda}(\alpha,\beta,\gamma)\sum\limits_{l=0}^{n-i}\beta^lS(n-i,l,\alpha,\beta,0)x^l\lambda^{l}.
	\end{equation}
\end{theorem}
\begin{proof}In each barred preferential arrangement certain blocks are fixed and some are permuted. From $[n]$ we can choose $n-i$ elements in $\binom{n}{n-i}$ ways. These elements will be used to form the fixed blocks. The number of partitions of $[n-i]$ into $l$ blocks each having $\beta$ compartments satisfying the conditions of Definition~\ref{definition:11} is $\beta^lS(n-i,l,\alpha,\beta,0)$, where the block having $\gamma$ compartments is empty. The $l$ cells can be colored in $x^l$ ways. The $l$ blocks can then be placed into $\lambda$ sections in $\lambda^l$ ways, where $l$ ranges from 0 to $n-i$. Once these blocks are placed on the respective sections they are fixed. With the remaining $i$ elements, arrangements can be made in which none of the blocks placed inbetween the bars are fixed and the block having the $\gamma$ compartments maybe none empty this can be done in  $\mathcal{B}^{0,x}_{i,\lambda}(\alpha,\beta,\gamma)$ ways.
	
\end{proof}
\fi

\section{Asymptotics\label{section:4}}
In this section, we propose asymptotic results for the numbers $\mathcal{B}^{r,x}_{n,\lambda}(\alpha,\beta,\gamma)$ based on a method developed in \cite{hsu1990power,hsu1991kind}. The method has also been used in \cite{JoseNkonkobeunified,corcino2020second,hsu1998unified,nkonkobe2020combinatorial}.  Let  $\overline{k}$ denote the vector $(k_1,k_2,\ldots,k_n)$  representing the partition $1^{k_1}2^{k_2}\cdots n^{k_n}$, such that $k_1+2k_2+\cdots+nk_n=n$. Where $k=k_1+k_2+\cdots+k_n$ represents the number of parts of the partition $1^{k_1}2^{k_2}\cdots n^{k_n}$. Let $c\in\mathbb{C}$ and $n\in\mathbb{N}$. Let $(c)_n$ denote the product $c(c-1)(c-2)\cdots(c-n+1)$, with $(c)_0=1$. Let  $\sigma(n)$ denote the set of partitions of the integer $n\in \mathbb{N}$, and  $\sigma(n,k)$ denote the set of partitions of the integer $n\in \mathbb{N}$ having $k$ parts. Let  $\Omega(t)$ denote the formal power series $\sum\limits_{n=0}^\infty b_nt^n$ over $\mathbb{C}$ with $\Omega(0)=b_0=1$. We suppose that, for any $\alpha\in \mathbb{C}$ with $\alpha\not= 0$, we have a formal power series

\begin{equation}
	\Gamma(t)=(\Omega(t))^\alpha=\sum_{n=0}^\infty a(\alpha,n)t^n,\;
\end{equation} where $a(\alpha,n)=[t^n]\Gamma(t)$ and ${\alpha\brace 0}=1$. Then for $\delta\in \mathbb{C}$ with $\delta\not=0$ we have (see \cite{hsu1990power,hsu1991kind})

\begin{equation}\label{equation:77}\frac{1}{(\delta)_n}\left[t^n\right]\left(\Gamma(t)\right)^{\delta}=\frac{1}{(\delta)_n}a(\alpha\delta,n)=\sum_{f=0}^m\frac{W(n,f)}{(\delta-n+f)_f}\;\;+o\left(\frac{W(n,m)}{(\delta-n+m)_m}\right),\end{equation}
where $W(n,f)=\sum\limits_{\sigma(n,n-f)}\frac{b_1^{k_1}b_2^{k_2}\cdots b_n^{k_n}}{k_1!k_2!\cdots k_n!}$, $0\leq i<n$,  and $n=o(\sqrt{|\delta|})$ as  $|\delta|\rightarrow \infty$. By \eqref{equation:77} we have the following result.

\begin{theorem}\label{theorem:30}\cite{hsu1990power,hsu1991kind} For all integers $n\geq0$, we have

	\begin{equation}
		\frac{a(\alpha\delta,n)}{(\delta)_n}=\sum\limits_{f=0}^m\frac{W(n,f)}{(\delta-n+f)_f}+o\begin{pmatrix}
			\frac{W(n,m)}{(\delta-n+m)_m}
		\end{pmatrix}.
	\end{equation}
	\end{theorem}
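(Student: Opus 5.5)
Since $\Gamma(t)=(\Omega(t))^{\alpha}$, we have $(\Gamma(t))^{\delta}=(\Omega(t))^{\alpha\delta}$, so $[t^n](\Gamma(t))^{\delta}=a(\alpha\delta,n)$ directly from the definition of $a(\cdot,n)$. This gives the first equality in \eqref{equation:77}. What remains is to expand $[t^n](\Omega(t))^{\delta'}$ with $\delta'=\alpha\delta$ and to organise the expansion in falling factorials of $\delta$. I will take the normalisation in which $\Omega$ has already absorbed the exponent, i.e. I prove the statement for $[t^n](\Omega(t))^{\delta}$ and then substitute.

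The first step is to write $\Omega(t)=1+u(t)$ with $u(t)=\sum_{j\ge1}b_jt^j$. By the binomial series,
\[
(1+u)^{\delta}=\sum_{k\ge0}\frac{(\delta)_k}{k!}u^k .
\]
Expanding $u^k$ with the multinomial theorem and collecting the terms contributing to $t^n$ gives the exact formula
\[
[t^n](\Omega(t))^{\delta}=\sum_{k=0}^{n}(\delta)_k\sum_{\sigma(n,k)}\frac{b_1^{k_1}b_2^{k_2}\cdots b_n^{k_n}}{k_1!k_2!\cdots k_n!}.
\]
Reindexing by $f=n-k$, the inner sum is exactly $W(n,f)$, so
\[
[t^n](\Omega(t))^{\delta}=\sum_{f=0}^{n}(\delta)_{n-f}\,W(n,f).
\]
Dividing by $(\delta)_n$ and using $(\delta)_n=(\delta)_{n-f}(\delta-n+f)_f$ yields the exact identity
\[
\frac{[t^n](\Omega(t))^{\delta}}{(\delta)_n}=\sum_{f=0}^{n}\frac{W(n,f)}{(\delta-n+f)_f}.
\]

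The second step truncates this sum at $f=m$. I need the tail $\sum_{f>m}W(n,f)/(\delta-n+f)_f$ to be $o\bigl(W(n,m)/(\delta-n+m)_m\bigr)$. The ratio of consecutive denominators is about $|\delta|$. The quantities $W(n,f)$ are sums over partitions with $n-f$ parts, whose number is bounded polynomially in $n$. So I would bound the ratio of the $(f+1)$-th term to the $f$-th term by $C\,n^2/|\delta|$, with $C$ depending on the coefficients $b_j$. Under the hypothesis $n=o(\sqrt{|\delta|})$ this ratio tends to $0$, and summing the resulting geometric tail gives the error term.

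I expect the main obstacle to be this last step. Bounding $W(n,f+1)$ against $W(n,m)$ requires some control on the growth of the $b_j$, and one must also handle $W(n,m)$ possibly vanishing. I would follow Hsu's argument. It compares each partition with $n-f-1$ parts to partitions with $n-f$ parts, obtained by splitting a part; this gives $W(n,f+1)\le c\,n^2\,W(n,f)$ for bounded $b_j$, and when $W(n,m)=0$ the error term is read as a bound by the next nonzero term. The theorem itself is the case $m$ fixed with $n$ arbitrary, and it follows at once from this exact identity together with the tail estimate.
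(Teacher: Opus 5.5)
Your exact identity is correct, and it is the real content behind the expansion. The paper does not prove this statement at all; it only cites Hsu. Expanding binomially and multinomially gives exactly $[t^n](\Omega(t))^{\delta}/(\delta)_n=\sum_{f=0}^{n}W(n,f)/(\delta-n+f)_f$.

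Your ``then substitute'' step does not give the printed formula when $\alpha\neq1$. With exponent $\alpha\delta$ you get $(\alpha\delta)_{n-f}$ rather than $(\delta)_{n-f}$; already for $n=1$ the left side is $\alpha b_1$ while the right side is $W(1,0)=b_1$. Applying the identity to $\Gamma$ instead gives $W$ built from the $a(\alpha,j)$, not from the $b_j$. You should say that you prove the case $\alpha=1$.

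The genuine gap is the tail estimate, which is the entire analytic content. Your inequality $W(n,f+1)\le c\,n^2W(n,f)$ fails even for bounded nonnegative coefficients. With $b_1=b_3=1$ and $b_2=0$ one gets $W(n,1)=b_1^{n-2}b_2/(n-2)!=0$ but $W(n,2)=1/(n-3)!$. Splitting a part $j$ as $1+(j-1)$ bounds $W(n,f+1)$ by $W(n,f)$ only if $|b_j|\le C|b_1b_{j-1}|$, and boundedness does not give that. For complex $b_j$, cancellation can make $W(n,f)$ vanish while $W(n,f+1)$ does not. Also, the number of partitions of $n$ into $n-f$ parts is $p(f)$ for $f\le n/2$, which is not polynomially bounded in $n$ uniformly in $f$.

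Reading the error term as ``the next nonzero term'' changes the theorem rather than proving it. In fact the statement is false without hypotheses:
\begin{itemize}
\item For $\Omega=1+t^2$, $n=2$, $m=0$, the left side is $1/(\delta-1)$ while $W(2,0)=0$.
\item For $b_2=0\neq b_3$, $n=3$, $m=1$, the remainder is $b_3/((\delta-1)(\delta-2))$ while $W(3,1)=0$.
\item For $b_j=(j!)^2$, the single term $b_n/(\delta-1)_{n-1}$ (from $f=n-1$) swamps $W(n,0)$ along $n\approx|\delta|^{2/5}$.
\end{itemize}
So you must assume $b_1\neq0$, $b_2\neq0$ (or at least $W(n,m)\neq0$) and a positive radius of convergence. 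Then compare every $W(n,f)$ with an absolute majorant, not with its neighbour.

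Here is how that goes. Rescale $t\mapsto t/b_1$, under which both sides acquire the factor $b_1^{-n}$, so that $b_1=1$, and choose $K$ with $|b_j|\le K^{j-1}$. On $\sigma(n,n-f)$ one has $\sum_j(j-1)k_j=f$, and $\sum_{\sigma(n,k)}k!/\prod_jk_j!=\binom{n-1}{k-1}$ counts compositions. Hence $|W(n,f)|\le K^f\binom{n-1}{f}/(n-f)!$, so $n!\,|W(n,f)|/|(\delta-n+f)_f|\le x^f/f!$ with $x=Kn^2/(|\delta|-n)\to0$. Therefore $n!$ times the tail over $f>m$ is at most $x^{m+1}e^{x}$.

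For the retained term, $W(n,m)=[t^m](1+v(t))^{n-m}/(n-m)!$ with $v(t)=\sum_{j\ge1}b_{j+1}t^j$. The coefficient $[t^m](1+v)^k$ is a polynomial in $k$ with leading term $b_2^mk^m/m!$. So $n!\,|W(n,m)|\sim|b_2|^mn^{2m}/m!$ as $n\to\infty$, and it is a fixed nonzero number when $n$ stays bounded. Hence the $m$-th term has exact order $(n^2/|\delta|)^m$ and the tail is $O((n^2/|\delta|)^{m+1})$. The hypothesis $n=o(\sqrt{|\delta|})$ is exactly what makes the tail $o$ of the $m$-th term.
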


Applying Theorem~\ref{theorem:30} to \eqref{equation:4} we have the following result. 
\begin{corollary}\label{theorem:200} For all integers $n\geq0$, we have

	\begin{equation}
		\frac{\mathcal{B}^{r,x}_{n,\lambda}(\alpha,\beta,\gamma\lambda)}{(\delta)_nn!}=\sum\limits_{f=0}^m\frac{W(n,f)}{(\delta-n+f)_f}+o\begin{pmatrix}
			\frac{W(n,m)}{(\delta-n+m)_m}
		\end{pmatrix},
	\end{equation}
	
	\noindent	where \begin{equation}\label{equation:200}W(n,f)=\sum\limits_{\sigma(n,n-f)}\prod\limits_{i=1}^n\frac{1}{k_i!}\begin{bmatrix}
			\frac{\mathcal{B}^{r,x}_{i,1}(\alpha,\beta,\gamma)}{i!}
		\end{bmatrix}^{k_i},\end{equation} and $n=o(\sqrt{|\delta|})$ as  $|\delta|\rightarrow \infty$.
	
\end{corollary}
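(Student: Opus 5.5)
The plan is to put the generating function \eqref{equation:4}, with $\gamma$ replaced by $\gamma\lambda$, into the form $\Gamma(t)^{\delta}$ required by Theorem~\ref{theorem:30}, with $\delta=\lambda$ playing the role of the large parameter. Write $u=x((1+\alpha t)^{\beta/\alpha}-1)$. Then
\[
(1+\alpha t)^{\frac{\gamma\lambda}{\alpha}}\frac{u^{r\lambda}e^{-\lambda u}}{(1-u)^{(r+1)\lambda}}=\left[(1+\alpha t)^{\frac{\gamma}{\alpha}}\frac{u^{r}e^{-u}}{(1-u)^{r+1}}\right]^{\lambda}.
\]
By Theorem~\ref{theorem:5}, the bracket is $\sum_{i\ge0}\mathcal{B}^{r,x}_{i,1}(\alpha,\beta,\gamma)\,t^i/i!$. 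So the natural choice is
\[
\Omega(t)=\sum_{i\ge0}b_it^i,\qquad b_i=\frac{\mathcal{B}^{r,x}_{i,1}(\alpha,\beta,\gamma)}{i!},
\]
with exponent $\alpha=1$ in the notation of Theorem~\ref{theorem:30} (not the paper's parameter $\alpha$), and $\delta=\lambda$. This gives $a(\delta,n)=[t^n]\Omega(t)^{\delta}=\mathcal{B}^{r,x}_{n,\lambda}(\alpha,\beta,\gamma\lambda)/n!$. That is exactly the left-hand side after dividing by $(\delta)_n$, and $W(n,f)$ in \eqref{equation:200} is then literally the $W(n,f)$ of Theorem~\ref{theorem:30} with these $b_i$.

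The key steps, in order, are as follows.
\begin{enumerate}
\item Verify the factorization above. This needs $r\lambda$, $(r+1)\lambda$ and $\gamma\lambda/\alpha$ to split as $\lambda$ times the $\lambda=1$ exponents, which is immediate. It is essentially \eqref{equation:40}, with the $(1+\alpha t)^{\gamma/\alpha}$ factor distributed over the $\lambda$ copies; this is why the statement uses $\gamma\lambda$.
\item Identify the coefficients of the base series via Theorem~\ref{theorem:5}.
\item Apply Theorem~\ref{theorem:30} directly, allowing $\delta$ to be a general complex parameter with $|\delta|\to\infty$ and $n=o(\sqrt{|\delta|})$.
\end{enumerate}

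The main obstacle is the normalization $\Omega(0)=b_0=1$ required by Theorem~\ref{theorem:30}. The base series is $\mathcal{B}^{r,x}_{0,1}=d_{0,r}$, and for $r\ge1$ it starts at order $t^r$ because of the factor $u^r$, so $b_0$ is not $1$. For $r=0$ we have $b_0=d_{0,0}=1$ and everything goes through cleanly. For $r\ge1$, I would try to factor out the leading term: write $u^r=(x\beta t)^r(1+O(t))^r$, so that $\Omega(t)^\lambda=(x\beta t)^{r\lambda}\,\widetilde\Omega(t)^\lambda$ with $\widetilde\Omega(0)=1$. I would then apply Theorem~\ref{theorem:30} to $\widetilde\Omega$ at index $n-r\lambda$. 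Alternatively, following the paper's formal convention, I would read the statement with the $b_i$ as the normalized coefficients and note that the expansion holds formally as in \cite{hsu1990power,hsu1991kind}. In the write-up I would present the $r=0$ (or formally normalized) case as the direct application, and flag that the shift is needed otherwise.
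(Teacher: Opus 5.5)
Your route is the paper's. Its entire proof is ``apply Theorem~\ref{theorem:30} to \eqref{equation:4}'': write the generating function with $\gamma\lambda$ as $\Omega(t)^{\lambda}$ with $b_i=\mathcal{B}^{r,x}_{i,1}(\alpha,\beta,\gamma)/i!$, take $\delta=\lambda$, and set Hsu's exponent to $1$. For $r=0$ your argument is complete.

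Your remark that $\Omega(0)=d_{0,r}=0$ for $r\ge1$ is not a technical obstacle to work around. It makes the statement false for $r\ge1$, and the paper's one-line proof shares the defect. The expansion in Theorem~\ref{theorem:30} rests on the exact identity $[t^n](1+g(t))^{\delta}=\sum_{f}(\delta)_{n-f}W(n,f)$ with $g=\sum_{i\ge1}b_it^i$. So when $b_0=0$, the right-hand side computes coefficients of $(1+\Omega(t))^{\delta}$, not of $\Omega(t)^{\delta}$. Concretely, take $r=1$, $n=1$, $m=0$:
\begin{itemize}
\item The generating function is $O(t^{r\lambda})$, so the left-hand side is $0$ for every $\lambda\ge2$.
\item For $n=1$ the expansion is exact, and the right-hand side is $W(1,0)=b_1=\mathcal{B}^{1,x}_{1,1}(\alpha,\beta,\gamma)=d_{1,1}x\beta=x\beta$.
\item The claim therefore reads $0=x\beta(1+o(1))$, which fails whenever $x\beta\neq0$.
\end{itemize}
More generally, with $\delta=\lambda$ and $n=o(\sqrt{|\delta|})$ one eventually has $n<r\lambda$. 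The left-hand side then vanishes identically, while the right-hand side is in general nonzero.

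So your proposed repair cannot establish the corollary as stated, and you should not present it as doing so. Factoring $\Omega(t)=(x\beta t)^r\widetilde\Omega(t)$ is legitimate: indeed $b_r=d_{r,r}(x\beta)^rS(r,r,\alpha,\beta,\gamma)/r!=(x\beta)^r$, assuming $x\beta\neq0$. It gives $\mathcal{B}^{r,x}_{n,\lambda}(\alpha,\beta,\gamma\lambda)/n!=(x\beta)^{r\lambda}[t^{n-r\lambda}]\widetilde\Omega(t)^{\lambda}$. But Theorem~\ref{theorem:30} applied to this yields a different statement:
\begin{itemize}
\item the normalization is $(x\beta)^{r\lambda}(\lambda)_{n-r\lambda}$ rather than $(\delta)_n$;
\item the $W$'s are built from $\widetilde b_i=\mathcal{B}^{r,x}_{i+r,1}(\alpha,\beta,\gamma)/\bigl((i+r)!(x\beta)^r\bigr)$, not from the quantities in \eqref{equation:200};
\item $\delta$ must be the integer $\lambda$, since otherwise $t^{r\delta}$ is not a power series, which contradicts your step 3;
\item the regime is $0\le n-r\lambda=o(\sqrt{\lambda})$, which is disjoint from the stated $n=o(\sqrt{|\delta|})$.
\end{itemize}
Reading the $b_i$ as ``normalized coefficients'' likewise silently redefines $W(n,f)$.

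The correct write-up has three parts: the proof for $r=0$ exactly as you describe, the counterexample above for $r\ge1$, and, optionally, the shifted expansion as a corrected replacement statement.
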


We now compute few values of $W(n,f)$.
\begin{align*}W(n,0)&=\frac{1}{n!}\begin{Bmatrix}\frac{\mathcal{B}^{r,x}_{1,1}(\alpha,\beta,\gamma)}{1!}\end{Bmatrix}^{n},\\
W(n,1)&=\frac{1}{(n-2)!}\begin{Bmatrix}\frac{\mathcal{B}^{r,x}_{1,1}(\alpha,\beta,\gamma)}{1!}\end{Bmatrix}^{n-2}\begin{Bmatrix}\frac{\mathcal{B}^{r,x}_{2,1}(\alpha,\beta,\gamma)}{2!}\end{Bmatrix},\end{align*}
\begin{align*}
W(n,2)=&\frac{1}{(n-3)!}\begin{Bmatrix}\frac{\mathcal{B}^{r,x}_{1,1}(\alpha,\beta,\gamma)}{1!}\end{Bmatrix}^{n-3}\begin{Bmatrix}\frac{\mathcal{B}^{r,x}_{3,1}(\alpha,\beta,\gamma)}{3!}\end{Bmatrix}\\&+\frac{1}{2!(n-4)!}\begin{Bmatrix}\frac{\mathcal{B}^{r,x}_{1,1}(\alpha,\beta,\gamma)}{1!}\end{Bmatrix}^{n-4}\begin{Bmatrix}\frac{\mathcal{B}^{r,x}_{2,1}(\alpha,\beta,\gamma)}{2!}\end{Bmatrix}^2,\\
	W(n,3)=&\frac{1}{(n-4)!}\begin{Bmatrix}
		\frac{\mathcal{B}^{r,x}_{1,1}(\alpha,\beta,\gamma)}{1!}
	\end{Bmatrix}^{n-4}\begin{Bmatrix}
		\frac{\mathcal{B}^{r,x}_{4,1}(\alpha,\beta,\gamma)}{4!}
	\end{Bmatrix}\\&+\frac{1}{(n-5)!}\begin{Bmatrix}
		\frac{\mathcal{B}^{r,x}_{1,1}(\alpha,\beta,\gamma)}{1!}
	\end{Bmatrix}^{n-5}\begin{Bmatrix}
		\frac{\mathcal{B}^{r,x}_{2,1}(\alpha,\beta,\gamma)}{2!}
	\end{Bmatrix}\begin{Bmatrix}
		\frac{\mathcal{B}^{r,x}_{3,1}(\alpha,\beta,\gamma)}{3!}
	\end{Bmatrix}\\&+\frac{1}{3!(n-6)!}\begin{Bmatrix}
		\frac{\mathcal{B}^{r,x}_{1,1}(\alpha,\beta,\gamma)}{1!}
	\end{Bmatrix}^{n-6}\begin{Bmatrix}
		\frac{\mathcal{B}^{r,x}_{2,1}(\alpha,\beta,\gamma)}{2!}
	\end{Bmatrix}^3,
\end{align*}

\begin{align*}W(n,4)=&\frac{1}{(n-5)!}\begin{Bmatrix}
		\frac{\mathcal{B}^{r,x}_{1,1}(\alpha,\beta,\gamma)}{1!}
	\end{Bmatrix}^{n-5}\begin{Bmatrix}
		\frac{\mathcal{B}^{r,x}_{5,1}(\alpha,\beta,\gamma)}{5!}
	\end{Bmatrix}\\&+\frac{1}{2!(n-6)!}\begin{Bmatrix}
		\frac{\mathcal{B}^{r,x}_{1,1}(\alpha,\beta,\gamma)}{1!}
	\end{Bmatrix}^{n-6}\begin{Bmatrix}
		\frac{\mathcal{B}^{r,x}_{3,1}(\alpha,\beta,\gamma)}{3!}
	\end{Bmatrix}^2\\&+\frac{1}{2!(n-7)!}\begin{Bmatrix}
		\frac{\mathcal{B}^{r,x}_{1,1}(\alpha,\beta,\gamma)}{1!}
	\end{Bmatrix}^{n-7}\begin{Bmatrix}
		\frac{\mathcal{B}^{r,x}_{2,1}(\alpha,\beta,\gamma)}{2!}
	\end{Bmatrix}^2\begin{Bmatrix}
		\frac{\mathcal{B}^{r,x}_{1,1}(\alpha,\beta,\gamma)}{3!}
	\end{Bmatrix}\\&+\frac{1}{4!(n-8)!}\begin{Bmatrix}
		\frac{\mathcal{B}^{r,x}_{1,1}(\alpha,\beta,\gamma)}{1!}
	\end{Bmatrix}^{n-8}\begin{Bmatrix}
		\frac{\mathcal{B}^{r,x}_{2,1}(\alpha,\beta,\gamma)}{2!}
	\end{Bmatrix}^4\\&+
	\frac{1}{2!(n-6)!}\begin{Bmatrix}
		\frac{\mathcal{B}^{r,x}_{1,1}(\alpha,\beta,\gamma)}{1!}
	\end{Bmatrix}^{n-6}\begin{Bmatrix}
		\frac{\mathcal{B}^{r,x}_{2,1}(\alpha,\beta,\gamma)}{2!}
	\end{Bmatrix}\begin{Bmatrix}
		\frac{\mathcal{B}^{r,x}_{4,1}(\alpha,\beta,\gamma)}{4!}
	\end{Bmatrix},
\end{align*}
\begin{align*}
	W(n,5)=&\frac{1}{(n-6)!}\begin{Bmatrix}
		\frac{\mathcal{B}^{r,x}_{1,1}(\alpha,\beta,\gamma)}{1!}
	\end{Bmatrix}^{n-6}\begin{Bmatrix}
		\frac{\mathcal{B}^{r,x}_{6,1}(\alpha,\beta,\gamma)}{6!}
	\end{Bmatrix}\\&+\frac{1}{(n-7)!}\begin{Bmatrix}
		\frac{\mathcal{B}^{r,x}_{1,1}(\alpha,\beta,\gamma)}{1!}
	\end{Bmatrix}^{n-7}\begin{Bmatrix}
		\frac{\mathcal{B}^{r,x}_{2,1}(\alpha,\beta,\gamma)}{2!}
	\end{Bmatrix}\begin{Bmatrix}
		\frac{\mathcal{B}^{r,x}_{5,1}(\alpha,\beta,\gamma)}{5!}
	\end{Bmatrix}\\&+\frac{1}{(n-7)!}\begin{Bmatrix}
		\frac{\mathcal{B}^{r,x}_{1,1}(\alpha,\beta,\gamma)}{1!}
	\end{Bmatrix}^{n-7}\begin{Bmatrix}
		\frac{\mathcal{B}^{r,x}_{4,1}(\alpha,\beta,\gamma)}{4!}
	\end{Bmatrix}\begin{Bmatrix}
		\frac{\mathcal{B}^{r,x}_{3,1}(\alpha,\beta,\gamma)}{3!}
	\end{Bmatrix}\\&+\frac{1}{2!(n-8)!}\begin{Bmatrix}
		\frac{\mathcal{B}^{r,x}_{1,1}(\alpha,\beta,\gamma)}{1!}
	\end{Bmatrix}^{n-8}\begin{Bmatrix}
		\frac{\mathcal{B}^{r,x}_{2,1}(\alpha,\beta,\gamma)}{2!}
	\end{Bmatrix}^2\\&+
	\frac{1}{2!(n-8)!}\begin{Bmatrix}
		\frac{\mathcal{B}^{r,x}_{1,1}(\alpha,\beta,\gamma)}{1!}
	\end{Bmatrix}^{n-8}\begin{Bmatrix}
		\frac{\mathcal{B}^{r,x}_{2,1}(\alpha,\beta,\gamma)}{2!}
	\end{Bmatrix}\begin{Bmatrix}
		\frac{\mathcal{B}^{r,x}_{3,1}(\alpha,\beta,\gamma)}{3!}
	\end{Bmatrix}^2\\&+
	\frac{1}{3!(n-9)!}\begin{Bmatrix}
		\frac{\mathcal{B}^{r,x}_{1,1}(\alpha,\beta,\gamma)}{1!}
	\end{Bmatrix}^{n-9}\begin{Bmatrix}
		\frac{\mathcal{B}^{r,x}_{2,1}(\alpha,\beta,\gamma)}{2!}
	\end{Bmatrix}^3\begin{Bmatrix}
		\frac{\mathcal{B}^{r,x}_{3,1}(\alpha,\beta,\gamma)}{3!}
	\end{Bmatrix}\\&+
	\frac{1}{5!(n-10)!}\begin{Bmatrix}
		\frac{\mathcal{B}^{r,x}_{1,1}(\alpha,\beta,\gamma)}{1!}
	\end{Bmatrix}^{n-10}\begin{Bmatrix}
		\frac{\mathcal{B}^{r,x}_{2,1}(\alpha,\beta,\gamma)}{2!}
	\end{Bmatrix}^5.
\end{align*} 

Thus, 
\begin{align*}\frac{\mathcal{B}^{r,x}_{n,\lambda}(\alpha,\beta,\gamma)}{n!}\sim\: & (\delta)_nW(n,0)+(\delta)_{n-1}W(n,1)+(\delta)_{n-2}W(n,2)\\&+(\delta)_{n-3}W(n,3)+(\delta)_{n-4}W(n,4)+(\delta)_{n-5}W(n,5).\end{align*}

\section{Conflict of Interest} The author indicated no conflict of interest.
\section{Conclusion and Further works}
In this study we proposed a combinatorial interpretation of higher order $r$-deranged Bell numbers, including interpretations of some of their combinatorial identities. We also provided some asymptotic results. The work done in this study may be adapted to provide combinatorial interpretation of other kinds of Bell like polynomials. This can be achieve by adapting the combinatorial interpretation we have given in this study of the numbers $\mathcal{B}^{r,x}_{n,\lambda}(\alpha,\beta,\gamma)$, as the underlying stirling numbers using in $\mathcal{B}^{r,x}_{n,\lambda}(\alpha,\beta,\gamma)$ generalize several other combinatorial numbers. For future research one may also study the properties of the roots of the numbers $\mathcal{B}^{r,x}_{n,\lambda}(\alpha,\beta,\gamma)$ we have defined in this study. Recently in \cite{djemmada2025partial} the authors discussed non-degenerate partial deranged bell numbers, which represent the number of partitions of a set $[n]$ with exactly $r$ fixed blocks, incoporating our results in this study one may extend this to degenerate polynomials as well.  %Higher order $r$ Deranged polynomials based on these mentioned generalisations of stirling numbers may be studied. Roots of such polynomials may be studied. 
% The Higher order $r$-deranged Bell polynomials may also

\section{Acknowledgements} The author would like to thank the anonymous referees for their valuable comments. The author would like to also acknowledge financial support from university of the Witwatersrand through the FR\&IC Start-up Research Grant.

\bibliography{document.bib}{}
\bibliographystyle{plain}

\iffalse

 \fi

\end{document}